\documentclass[letterpaper, 10 pt, conference]{ieeeconf}  
\IEEEoverridecommandlockouts                              
\usepackage{cite}
\usepackage{amsmath,amssymb,amsfonts}
\usepackage{amsthm}
\usepackage{graphicx}
\usepackage{textcomp}
\usepackage{booktabs}

\theoremstyle{plain}
\newtheorem{theorem}{Theorem}
\newtheorem{lemma}{Lemma}
\newtheorem{proposition}{Proposition}
\theoremstyle{definition}
\newtheorem{assumption}{Assumption}
\theoremstyle{remark}

\title{\LARGE \bf
Advances in Continuous-Time Multiparametric Optimal Control --- The Role of Time-Varying Active Constraints
}

\author{Lida Lamakani and Efstratios N. Pistikopoulos
\thanks{This work was supported by the Texas A\&M Energy Institute, with partial support from the National Science Foundation through the GOALI (CBET-2401564) and RETRO (CBET-2312457) awards.}
\thanks{The authors are with the Texas A\&M Energy Institute and the Artie McFerrin Department of Chemical Engineering, Texas A\&M University, College Station, TX 77843, USA
        {\tt\small lida.lamakani@tamu.edu, stratos@tamu.edu}}%
}

\begin{document}
\maketitle
\thispagestyle{empty}
\pagestyle{empty}
\raggedbottom

\begin{abstract}
Multiparametric optimal control in continuous time partitions the space of initial states into critical regions, each with its own control law involving a set of time-varying active constraints. The boundaries between two neighbouring regions are either hyperplanes, which are given in closed form by one linear equation, or curved boundaries, which must be computed. The sequence of time-varying active constraints plays a key role in defining the linear or nonlinear shape of a critical-region boundary. In this paper we show how to determine that shape prior to any computation. In particular, we show that the boundary is a hyperplane under two conditions. First, the new constraint becomes active at the same instant at every point of the boundary. Second, the durations for which the other constraints stay active are the same at every point of the boundary. Otherwise, the boundary is curved. Both conditions can be read from the sequences of active constraints of the two neighbouring regions, which gives five rules, each yielding a hyperplane or a curved boundary. We illustrate the rules on the Newell--Lee evaporator, where they classify the eight boundaries of a seven-region partition into four hyperplanes, given in closed form, and four curved boundaries, computed exactly.
\end{abstract}

\begin{keywords}
\hyphenpenalty=10000 \exhyphenpenalty=10000 \sloppy
multiparametric optimal control; critical regions; hyperplane and curved boundaries;
time-varying active constraints; event time; Pontryagin's minimum principle.
\end{keywords}

\section{Introduction}

\begin{figure}[t]
\centering
\includegraphics[width=\columnwidth]{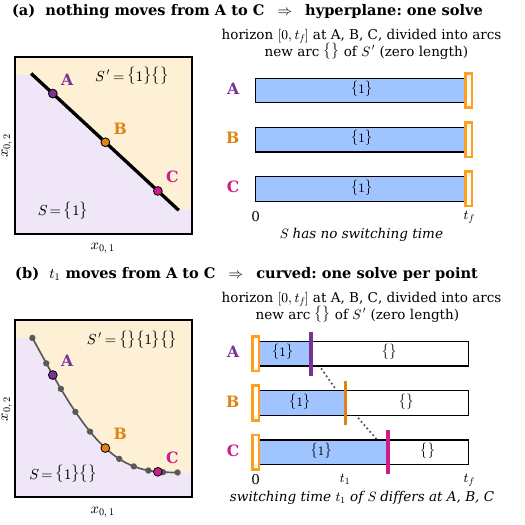}
\caption{Two neighbouring critical regions with labels $S$ and $S'$, and their boundary. Left, the space of initial states; each region is marked by its label, the arc sequence of its optimal solution, so $\{1\}\{\}$ means an arc with constraint~1 active followed by a free arc. Right, at the boundary points A, B, C, the horizon $[0,t_{f}]$ divided into the arcs (blue, constraint~1 active; white, free); the switching time $t_{1}$ is where one arc ends and the next begins; the event time is where the new arc of $S'$ appears (orange outline, of zero length on the boundary). (a) There is no switching time, so the solution has the same form at A, B and C; the boundary is a hyperplane. (b) The switching time $t_{1}$ differs at A, B and C; the boundary is curved.}
\label{fig:schematic}
\end{figure}

Multiparametric model predictive control solves the optimal control problem once, offline, for all initial states \cite{bemporad2002,pistikopoulos2020}. The result is a partition of the initial-state space into critical regions, with one control law for each region. Most of this work is done in discrete time, where the horizon is divided into $N$ intervals~\cite{bemporad2002,pistikopoulos2020,biegler2007}, the problem becomes a quadratic program, and the Karush--Kuhn--Tucker (KKT) conditions define the regions, whose number increases rapidly with $N$. Working with the continuous-time problem avoids this growth \cite{sakizlis2005,sun2016,lamakani2026a,lamakani2026c}. Pontryagin's minimum principle (PMP) provides the optimal input as a sequence of time-varying active constraints (an arc sequence)~\cite{srinivasan2003}, each arc being an interval on which the set of active constraints does not change, and a critical region is the set of initial states whose solution follows the same arc sequence, or structure; we call this sequence the \emph{label} of the region, denote it by $S$, and write it as the list of the active constraints; for example, $\{3\}\{1\}\{\}$ means that the horizon is divided into three arcs, with only constraint~3 active on the first, only constraint~1 active on the second, and no constraint active on the third; the instants at which one arc ends and the next begins are the switching times. Inside a region only the sequence is fixed; the switching times, and with them the arc durations, vary continuously with the initial state. 

\begin{figure*}[t]
\centering
\includegraphics[width=0.88\textwidth]{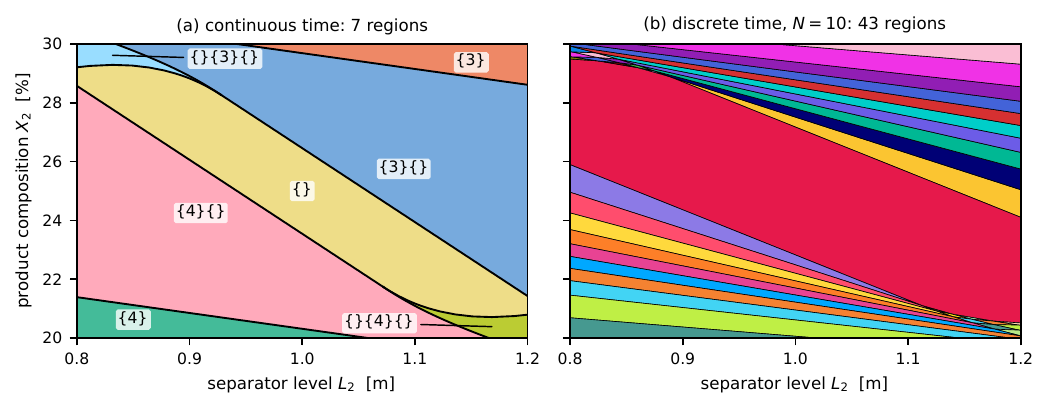}
\caption{The evaporator problem on the same parameter box. (a) Continuous time, seven regions, eight boundaries, all computed exactly. (b) Discrete time, $N=10$, as a multiparametric quadratic program, $43$ polytopes.}
\label{fig:map}
\end{figure*}

The continuous-time partition is more difficult to describe. In discrete time the regions are polytopes; in continuous time they need not be; a boundary is where an arc of the optimal input appears or disappears, the condition for this involves the switching times, and these depend nonlinearly on the initial state. In \cite{sun2016,sun2019} the boundaries are therefore left implicit and a classifier is trained to distinguish the regions. Here the boundaries are computed.

This paper presents a procedure that computes the continuous-time partition exactly, without solving an optimal control problem at any point (Section~\ref{sec:framework}, Fig.~\ref{fig:flow}), and focuses on the step in which a boundary is computed. There are two types of boundary, and the type determines the cost of that step. Fig.~\ref{fig:schematic} shows both schematically, and Fig.~\ref{fig:map}(a) corresponds to an evaporator example with four hyperplanes and four curves among its eight boundaries. The question addressed in this paper is how to test the type of a boundary before computing it. The key idea is that the labels of the two neighbouring regions are sufficient.

To address this, we proceed as follows.
\begin{enumerate}
\item Once the arc sequence and the arc durations (the lengths of the arcs) are fixed, the optimal solution at any instant depends linearly on the initial state, see Lemma~\ref{lem:affine}.
\item A boundary is where a new constraint starts becoming active, or an active one stops being active, at some instant, the event time. By fact 1, this condition is one linear equation in the initial state whose coefficients depend only on the arc sequence, the durations and the event time. The boundary is a hyperplane when the event time and the durations are the same at every point of the boundary, and curved otherwise, see Theorem~\ref{thm:main}.
\item Whether the event time and the durations are the same along the boundary can be read from the arc sequences of the two neighbouring regions, which results in five cases, each a hyperplane or curved, see Proposition~\ref{prop:test}.
\end{enumerate}
Note that the case of one input bound and a single switch was discussed in \cite{lamakani2026b}; there the durations cannot move, so the first condition alone is decisive. The present work provides a generalisation of that result, which becomes one case of the general rule.

\section{Multiparametric Optimal Control Problem}\label{sec:problem}

\subsection{Optimal control problem}

Let $x(t)\in\mathbb{R}^{n}$ be the state and $u(t)\in\mathbb{R}$ a scalar
input. The dynamics are linear and time invariant,
\begin{equation}
\dot x(t)=Ax(t)+Bu(t),\qquad x(0)=x_{0},
\label{eq:dyn}
\end{equation}
and the cost over a fixed horizon $[0,t_{f}]$ is
\begin{equation}
J(x_{0})=\tfrac12 x(t_{f})^{\!\top}\!Px(t_{f})
+\tfrac12\!\int_{0}^{t_{f}}\!\big(x^{\!\top}\!Qx+Ru^{2}\big)\,dt ,
\label{eq:cost}
\end{equation}
with $Q\succeq0$, $R>0$ and $P\succeq0$. The trajectory has to satisfy $m$ path
constraints,
\begin{equation}
g_{i}(x,u)=C_{x,i}\,x+C_{u,i}\,u-d_{i}\le 0,\qquad i=1,\dots,m,
\label{eq:con}
\end{equation}
for all $t\in[0,t_{f}]$. The initial state $x_{0}\in\Theta\subset\mathbb{R}^{n}$
is the parameter.

\begin{assumption}\label{as:mixed}
$C_{u,i}\neq 0$ for every $i$.
\end{assumption}

Input bounds are the special case $C_{x,i}=0$. Pure state constraints introduce costate jumps \cite{sun2016,hartl1995} and are outside the scope of this paper.

\subsection{Arcs and critical regions}

Introduce the costate $\lambda(t)\in\mathbb{R}^{n}$ and the Hamiltonian
\begin{equation}
\mathcal{H}=\tfrac12\big(x^{\!\top}\!Qx+Ru^{2}\big)+\lambda^{\!\top}(Ax+Bu)
+\textstyle\sum_{i}\mu_{i}\,g_{i}(x,u).
\end{equation}
Along the optimal solution the costate satisfies $\dot\lambda=-\partial\mathcal{H}/\partial x$ with the terminal condition $\lambda(t_{f})=Px(t_{f})$. $\mathcal{H}$ is quadratic in $u$ with second derivative $R>0$, hence convex in $u$; at each instant it has one minimiser over the feasible inputs, which is the optimal input, and the PMP conditions are necessary and sufficient \cite{abhijeet2024}. Where no constraint is active, $\partial\mathcal{H}/\partial u=0$ gives $u=-B^{\!\top}\!\lambda/R$. Where constraint $i$ is active, $g_{i}=0$ fixes the input, $u=(d_{i}-C_{x,i}x)/C_{u,i}$, and the same stationarity condition gives its multiplier $\mu_{i}=-(Ru+B^{\!\top}\!\lambda)/C_{u,i}$. With a scalar input at most one constraint is active on an arc; with several inputs the arc equations remain linear and only the case list of Proposition~\ref{prop:test} grows.

The horizon therefore splits into arcs, separated by the switching times. On each arc the set of active
constraints stays fixed and the pair $z=(x,\lambda)$ satisfies a linear ordinary differential equation with constant coefficients $M_{k}$ and $b_{k}$,
\begin{equation}
\dot z = M_{k}z+b_{k}.
\label{eq:arc}
\end{equation}
On an unconstrained arc $k$,
\begin{equation}
M_{k}=\begin{bmatrix}A & -BB^{\!\top}\!/R\\[2pt] -Q & -A^{\!\top}\end{bmatrix},
\qquad b_{k}=0 ,
\end{equation}
and on an arc $k$ where constraint $i$ is active, substituting the algebraic input
gives, with $A_{i}=A-BC_{x,i}/C_{u,i}$ and
$\Omega_{i}=Q+(R/C_{u,i}^{2})\,C_{x,i}^{\!\top}C_{x,i}$,
\begin{equation}
M_{k}=\begin{bmatrix}A_{i} & 0\\[2pt] -\Omega_{i} & -A_{i}^{\!\top}\end{bmatrix},
\qquad
b_{k}=\begin{bmatrix} B\,d_{i}/C_{u,i}\\[2pt]
(R\,d_{i}/C_{u,i}^{2})\,C_{x,i}^{\!\top}\end{bmatrix}.
\label{eq:Mb}
\end{equation}

\subsection{Where the boundaries come from}

A region ends where its structure is no longer optimal. Inside a region the multipliers and the constraint functions retain a strict sign along the whole trajectory. On a constrained arc the multiplier of the active constraint is positive, and wherever a constraint is inactive its function $g_{i}$ is negative. A boundary is reached when one of them touches zero at some instant $t^{*}$, which we call the \emph{event time}. One constraint becomes active or inactive there, so one arc appears. Hence two neighbouring regions differ by exactly one arc. On the boundary the new arc has zero duration, and inside the neighbouring region its duration is positive. The new arc can appear in four ways:
\begin{itemize}
\item at the start of the horizon, $t^{*}=0$, as in $\{\}\,|\,\{3\}\{\}$;
\item at the end of the horizon, $t^{*}=t_{f}$, as in $\{3\}\,|\,\{3\}\{\}$;
\item at a switching time of the structure, when a second constraint also becomes active or inactive there, as in $\{3\}\{\}\,|\,\{3\}\{1\}\{\}$;
\item inside an arc, when a constraint function touches zero without crossing it, as in $\{\}\,|\,\{\}\{3\}\{\}$; the arc is split, so the label gains two entries although only one arc is new.
\end{itemize}

Both $\mu_{i}$ and $g_{i}$ are linear in $z=(x,\lambda)$ at any fixed time, $c_{\mathrm{ev}}^{\!\top}z+c_{0}$ with $(c_{\mathrm{ev}},c_{0})$ determined by the constraint and the arc. That one of them vanishes at $t^{*}$ is the \emph{event condition}, with coefficients $(c_{\mathrm{ev}},c_{0})$. 

\section{Overview of the Partition Procedure}
\label{sec:framework}

\begin{figure}[t]
\centering
\includegraphics[width=0.84\columnwidth]{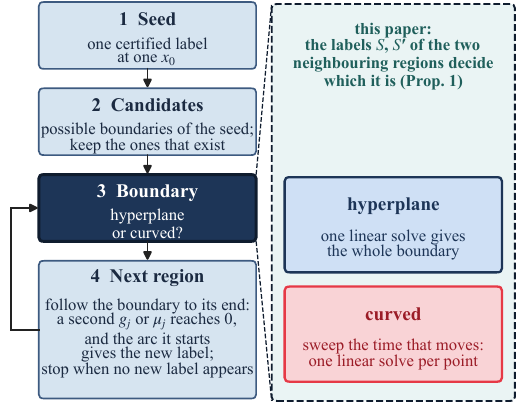}
\caption{The procedure that computes the partition, and the focus of this paper.}
\label{fig:flow}
\end{figure}

Fig.~\ref{fig:flow} shows the procedure, given in full in a companion paper. From one label verified at one initial state (Seed), the possible boundaries of its region are listed, one for each way a new arc can appear (Section~\ref{sec:problem}-C), and tested for existence (Candidates). Each existing boundary is computed (Boundary) and followed to its end, where a second constraint function or multiplier reaches zero; that condition identifies the arc that appears there, hence the next label and the next boundary to follow (Next region), until no new label appears.

Step~3 is the critical step and determines the computational cost. A hyperplane requires one linear solve, a curved boundary a sweep of the moving time, each point exact at the cost of one linear solve; which of the two a boundary is must be known before the step starts, and the remainder of this paper provides that test from the labels of the two neighbouring regions alone.

\section{One Linear Equation per Boundary}\label{sec:step1}

\subsection{Dependence of the trajectory on the initial state}

Fix an arc sequence and the arc durations $h=(h_{1},\dots,h_{K})$ with
$\sum_{k}h_{k}=t_{f}$, and let $t_{k}=h_{1}+\dots+h_{k}$, $t_{0}=0$. On arc
$k$ the pair $z$ satisfies \eqref{eq:arc} with constant coefficients, so
$z(t_{k})=\Phi_{k}z(t_{k-1})+\varphi_{k}$ with
\begin{equation}
\begin{bmatrix}\Phi_{k}&\varphi_{k}\\0&1\end{bmatrix}
=\exp\!\left(\!\begin{bmatrix}M_{k}&b_{k}\\0&0\end{bmatrix}\!h_{k}\right).
\label{eq:arcflow}
\end{equation}
Applying \eqref{eq:arcflow} arc after arc gives $z(t)=\Phi(h,t)\,z(0)+\varphi(h,t)$ for every $t$. For $t$ in arc $k$, $\Phi(h,t)=\Phi_{k}(t-t_{k-1})\,\Phi_{k-1}\cdots\Phi_{1}$, where $\Phi_{k}(t-t_{k-1})$ is \eqref{eq:arcflow} with $h_{k}$ replaced by $t-t_{k-1}$, the part of arc $k$ before $t$; $\varphi(h,t)$ is the constant term produced by the same product.
\begin{lemma}\label{lem:affine}
For a given arc sequence, the state and the costate at any time $t\in[0,t_{f}]$ are
\begin{equation}
z(t;x_{0})=\Phi(h,t)\big(W(h)x_{0}+w(h)\big)+\varphi(h,t),
\label{eq:affine}
\end{equation}
where $\Phi(h,t)$, $\varphi(h,t)$ are the products of matrix exponentials defined above, $W=\left[\begin{smallmatrix}I\\G\end{smallmatrix}\right]$ and $w=\left[\begin{smallmatrix}0\\\eta\end{smallmatrix}\right]$ with $\lambda(0)=G(h)\,x_{0}+\eta(h)$. All coefficients depend only on the arc sequence, the durations $h$ and the time $t$, and $x_{0}$ enters linearly; hence, for fixed $h$, the trajectory is affine in $x_{0}$ at every $t$.
\end{lemma}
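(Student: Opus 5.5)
The plan is to combine the arc-by-arc propagation already set up before the statement with the terminal condition $\lambda(t_f)=Px(t_f)$. That condition determines the unknown initial costate $\lambda(0)$ as an affine function of $x_0$. Throughout, the arc sequence and the durations $h$ are held fixed, so every matrix below is a constant matrix.

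\emph{Step 1: propagation.} I would first show by induction on the arc index that $z(t)=\Phi(h,t)z(0)+\varphi(h,t)$. On arc $k$ the system \eqref{eq:arc} is linear with constant coefficients. The augmented-exponential identity \eqref{eq:arcflow} is the standard variation-of-constants formula for $\dot z=M_kz+b_k$, written in homogeneous coordinates $(z,1)$. Composing these affine maps arc after arc gives again an affine map in $z(0)$. Its linear part is the product $\Phi_k(t-t_{k-1})\Phi_{k-1}\cdots\Phi_1$, and its constant term is the accumulated $\varphi(h,t)$. Both depend only on the sequence, on $h$ and on $t$.

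\emph{Step 2: the terminal condition.} Write $z(0)=(x_0,\lambda_0)$ and partition $\Phi(h,t_f)$ into $n\times n$ blocks $\Phi_{xx},\Phi_{x\lambda},\Phi_{\lambda x},\Phi_{\lambda\lambda}$, and $\varphi(h,t_f)$ into $(\varphi_x,\varphi_\lambda)$. Imposing $\lambda(t_f)=Px(t_f)$ gives
\begin{equation*}
(\Phi_{\lambda\lambda}-P\Phi_{x\lambda})\lambda_0=(P\Phi_{xx}-\Phi_{\lambda x})x_0+P\varphi_x-\varphi_\lambda .
\end{equation*}
Setting $N(h)=\Phi_{\lambda\lambda}-P\Phi_{x\lambda}$, we then get $G(h)=N^{-1}(P\Phi_{xx}-\Phi_{\lambda x})$ and $\eta(h)=N^{-1}(P\varphi_x-\varphi_\lambda)$. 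Substituting $z(0)=W(h)x_0+w(h)$ back into Step 1 gives \eqref{eq:affine}, with $x_0$ entering linearly.

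\emph{Main obstacle: invertibility of $N(h)$.} The delicate step is showing that $N(h)$ is invertible, so that $G$ and $\eta$ are well defined. I would argue this through uniqueness of the PMP solution. The arc-wise system with fixed $h$ is the first-order optimality system of a convex LQ problem: on free arcs the Hamiltonian structure comes from $R>0$ and $Q\succeq0$, and on constrained arcs it is a linear system with input eliminated and state weight $\Omega_i\succeq0$. This problem has weights $Q,\Omega_i\succeq0$ and $P\succeq0$. Suppose $N v=0$ for some $v\neq0$. Then $(x_0,\lambda_0)=(0,v)$ solves the homogeneous two-point boundary-value problem. The standard energy identity $\frac{d}{dt}(x^\top\lambda)=-x^\top Qx-\|B^\top\lambda\|^2/R$ on free arcs (and $-x^\top\Omega_ix$ on constrained arcs), integrated against $x(0)=0$ and $x(t_f)^\top\lambda(t_f)=x(t_f)^\top Px(t_f)\ge0$, forces $B^\top\lambda\equiv0$ on free arcs. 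Hence $x\equiv0$, and then $\lambda$ satisfies $\dot\lambda=-A^\top\lambda$ (or $-A_i^\top\lambda$) with $\lambda(t_f)=0$, so $\lambda\equiv0$, contradicting $v\neq0$.

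If this energy argument does not close cleanly on the constrained arcs, the fallback is to invoke the necessity-and-sufficiency of PMP for the convex problem cited above: it yields a unique $\lambda(0)$ for each $x_0$ in the region, and that is all the lemma needs.
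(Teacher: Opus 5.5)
Your proposal is correct. Up to the invertibility step it coincides with the paper's proof: you use the same block splitting of $\Phi(h,t_f)$, $\varphi(h,t_f)$ and the same terminal-condition equation (your $N(h)$ is the paper's $L$), so you get the same $G$ and $\eta$.

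The two routes differ only in why $L$ is invertible. The paper settles this in one sentence. With arcs and durations fixed, the problem is strictly convex in the free-arc input, so it has exactly one solution and hence one $\lambda_0$ per $x_0$. You instead prove $\ker N=\{0\}$ directly, and your energy argument does close on the constrained arcs:
\begin{itemize}
\item with $b_k=0$ the $A_i$ terms cancel in $\frac{d}{dt}(x^\top\lambda)$, leaving $-x^\top\Omega_i x\le 0$;
\item the integrated identity then forces $B^\top\lambda\equiv 0$ on free arcs;
\item every arc then carries $x(0)=0$ forward by $\dot x=Ax$ or $\dot x=A_i x$, so $x\equiv 0$;
\item backward propagation from $\lambda(t_f)=Px(t_f)=0$ then gives $\lambda\equiv 0$.
\end{itemize}

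Your identity is the explicit computation behind the paper's convexity claim. Since $x^\top Qx+Ru^2=x^\top\Omega_i x$ on a homogeneous constrained arc, it says the homogeneous stationary trajectory has zero cost. What your version buys is self-containedness. It works for every $h$, and uniformly when there is no free arc. It does not need the existence of a minimiser, nor the equivalence between solutions of the two-point boundary-value problem and minimisers of the fixed-structure problem; the paper's sentence leaves both implicit. The paper's version is shorter and keeps the optimisation interpretation in view.

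You therefore do not need your fallback, which as phrased would be weaker. Uniqueness of the optimum of the original inequality-constrained problem at points of the region does not by itself exclude a kernel vector $v$ of $N(h)$. The reason is that $(x_0,\lambda_0+sv)$ typically violates the sign of a multiplier or constraint next to a switching time, so it is not a competing optimum. The relevant uniqueness is that of the fixed-structure problem, which is what the paper invokes.
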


\begin{proof}
Write $z(0)=(x_{0},\lambda_{0})$, with $\lambda_{0}$ unknown, and split the flow over the horizon, $z(t_{f})=\Phi(h,t_{f})z(0)+\varphi(h,t_{f})$, into its state and costate blocks $\Phi_{xx},\Phi_{x\lambda},\Phi_{\lambda x},\Phi_{\lambda\lambda}$ and $\varphi_{x},\varphi_{\lambda}$. The terminal condition $\lambda(t_{f})=Px(t_{f})$ then gives one linear equation for $\lambda_{0}$,
\begin{equation}
L\,\lambda_{0}=(P\Phi_{xx}-\Phi_{\lambda x})\,x_{0}+P\varphi_{x}-\varphi_{\lambda},
\qquad L=\Phi_{\lambda\lambda}-P\Phi_{x\lambda}.
\label{eq:term}
\end{equation}
$L$ is invertible; with the arcs and durations fixed, the problem is strictly convex in its only remaining unknown, the input on the free arcs, so it has exactly one solution and one $\lambda_{0}$ for every $x_{0}$. Solving \eqref{eq:term} gives $\lambda_{0}=G(h)\,x_{0}+\eta(h)$ with
\begin{equation}
G=L^{-1}(P\Phi_{xx}-\Phi_{\lambda x}),\qquad
\eta=L^{-1}(P\varphi_{x}-\varphi_{\lambda}).
\label{eq:Gg}
\end{equation}
$G$ and $\eta$ contain only $P$ and the blocks of $\Phi(h,t_{f})$, $\varphi(h,t_{f})$, built from the constant $M_{k}$, $b_{k}$ and the durations; $x_{0}$ does not appear, so $\lambda_{0}$ is affine in $x_{0}$, $z(0)=Wx_{0}+w$, and the flow gives \eqref{eq:affine}.
\end{proof}

Note that $G$ and $\eta$ depend on all the durations $h$, because $\lambda(0)$ is fixed by the terminal condition at $t_{f}$. So the costate at the event time depends on every switching time, including those after the event, and so do the coefficients of the event condition. This is the mechanism behind condition (ii) of Theorem~\ref{thm:main} and behind Fig.~\ref{fig:schematic}(b).

\subsection{The boundary as an equation in the initial state}

Consider a boundary $\mathcal{F}$. At every point of $\mathcal{F}$ the
optimal solution has the same structure $S$ with durations $h(x_{0})$, and the event condition with coefficients $(c_{\mathrm{ev}},c_{0})$ holds at the event time $t^{*}(x_{0})$. By Lemma~\ref{lem:affine},
\begin{equation}
\varepsilon(x_{0})=c_{\mathrm{ev}}^{\!\top}z\big(t^{*};x_{0}\big)+c_{0}
=a(x_{0})^{\!\top}x_{0}+a_{0}(x_{0}),
\label{eq:eps}
\end{equation}
with
\begin{equation}
a^{\!\top}=c_{\mathrm{ev}}^{\!\top}\,\Phi\,W,\qquad
a_{0}=c_{\mathrm{ev}}^{\!\top}\big(\Phi\,w+\varphi\big)+c_{0},
\label{eq:ab}
\end{equation}
where $\Phi=\Phi(h,t^{*})$, $\varphi=\varphi(h,t^{*})$, $W=W(h)$, $w=w(h)$, and
$\mathcal{F}=\{x_{0}:\varepsilon(x_{0})=0\}$. We call a boundary \emph{curved} when it is not contained in any hyperplane. The coefficients $a$ and $a_{0}$ are products of the matrix exponentials \eqref{eq:arcflow} with constant vectors, so they depend only on three quantities, the arc sequence, the durations $h$ and the event time $t^{*}$. The boundary is a hyperplane when the three remain fixed along it, which is the subject of Section~\ref{sec:step2}.

\section{When the Boundary Is a Hyperplane}\label{sec:step2}

\begin{theorem}\label{thm:main}
Let $\mathcal{F}$ be a smooth $(n-1)$-dimensional piece of boundary between two
critical regions, $n$ the dimension of $x_{0}$, defined by the event condition with coefficients $(c_{\mathrm{ev}},c_{0})$, and let Assumption~\ref{as:mixed}
hold.
\begin{itemize}
\item[(a)] $\mathcal{F}$ lies in a hyperplane if (i) the event time $t^{*}(x_{0})$ is the same for every $x_{0}\in\mathcal{F}$ and (ii) the arc durations $h(x_{0})$ are the same for every $x_{0}\in\mathcal{F}$. If $t^{*}=0$ on $\mathcal{F}$ and the event condition
involves the state only, (ii) is not needed.
\item[(b)] If the structure $S$ along $\mathcal{F}$ has no switching point,
then $h=t_{f}$ and (ii) holds automatically. Suppose also that distinct times $t$
give distinct hyperplanes $\{x_{0}:c_{\mathrm{ev}}^{\!\top}z(t;x_{0})+c_{0}=0\}$.
Then $\mathcal{F}$ lies in a hyperplane if and only if (i) holds.
\end{itemize}
\end{theorem}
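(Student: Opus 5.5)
The plan rests on the representation \eqref{eq:eps}--\eqref{eq:ab}. On $\mathcal{F}$ we have $\varepsilon(x_{0})=a(x_{0})^{\!\top}x_{0}+a_{0}(x_{0})$, and by Lemma~\ref{lem:affine} the coefficients $a,a_{0}$ are fixed functions of the arc sequence $S$, the durations $h$ and the event time $t^{*}$ only. The sequence $S$ is constant along $\mathcal{F}$ by definition of a boundary between two given regions.

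For part (a), I would first assume (i) and (ii). Then $a=c_{\mathrm{ev}}^{\!\top}\Phi(h,t^{*})W(h)$ and $a_{0}$ are constants $\bar a,\bar a_{0}$, so $\mathcal{F}\subseteq\{x_{0}:\bar a^{\!\top}x_{0}+\bar a_{0}=0\}$. I still have to check that $\bar a\neq 0$, since otherwise the set is empty or all of $\mathbb{R}^{n}$. If $\bar a=0$, then $\varepsilon$ would be constant on a neighbourhood where $h,t^{*}$ are frozen. That contradicts $\mathcal{F}$ being an $(n-1)$-dimensional interface across which $\varepsilon$ changes sign. I expect to state this as a nondegeneracy remark rather than prove it in full.

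For the special case $t^{*}=0$ with a state-only event condition, I would note that $\Phi(h,0)=I$ and $\varphi(h,0)=0$, and that $c_{\mathrm{ev}}$ has zero costate block. Then $c_{\mathrm{ev}}^{\!\top}(Wx_{0}+w)=c_{x}^{\!\top}x_{0}$, because $W$ has identity state block and $w$ has zero state block. So $G(h)$ and $\eta(h)$ drop out, $\varepsilon=c_{x}^{\!\top}x_{0}+c_{0}$ is independent of $h$, and (ii) is unnecessary.

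For part (b), with no switching point the structure is a single arc, so $h=(t_{f})$ is fixed and (ii) is automatic. Sufficiency then follows from (a). For necessity, assume $\mathcal{F}$ lies in a hyperplane $H$ and that $t^{*}$ is not constant on $\mathcal{F}$. Each $x_{0}\in\mathcal{F}$ lies on $H_{t^{*}(x_{0})}=\{x_{0}:c_{\mathrm{ev}}^{\!\top}z(t^{*}(x_{0});x_{0})+c_{0}=0\}$. With $h$ frozen, these are hyperplanes depending continuously on $t$.

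The key step, and the main obstacle, is to derive the contradiction from this. I would argue via smoothness of $\mathcal{F}$ and continuity of $t^{*}$ on the connected piece. If $t^{*}$ takes two distinct values, it takes a continuum of values $t$ on an interval, by continuity of the event time, which I would justify via the implicit function theorem applied to the tangency/zero conditions. For each such $t$, the set $\mathcal{F}_{t}=\{t^{*}=t\}$ lies in $H\cap H_{t}$.

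If $H_{t}\neq H$, this intersection has dimension $n-2$. Then $\mathcal{F}=\bigcup_{t}\mathcal{F}_{t}$ is a one-parameter union of sets of dimension at most $n-2$ inside the hyperplane $H$. To get a contradiction I would instead use openness. Choose a point where $t^{*}$ is locally nonconstant; then, by Sard/measure, some level set $\mathcal{F}_{t}$ contains a relatively open subset of $\mathcal{F}$, or else $H$ agrees with $H_{t}$ for many $t$. In the latter case the hypothesis that distinct $t$ give distinct hyperplanes forces $t$ to be unique, contradicting nonconstancy. In the former case, a relatively open piece of $\mathcal{F}$ spans $H$ and lies in $H_{t}$, so $H_{t}=H$. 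Repeating this at two different values of $t$ gives $H_{t}=H_{t'}$ with $t\neq t'$, again a contradiction.

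Making this dimension-counting argument rigorous, in particular guaranteeing level sets with nonempty relative interior, is the delicate part. There I would use that $\mathcal{F}\cap H_{t}$ is closed and apply a Baire category argument over a countable dense set of times.
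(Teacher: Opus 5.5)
Your part (a), including the special case $t^{*}=0$ with a state-only event, and the sufficiency half of (b) follow the paper's argument. The remark that $\bar a\neq0$ is an extra the paper does not make. The necessity half of (b), however, has a genuine gap.

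Your argument uses only the incidence relation $x_{0}\in H_{t^{*}(x_{0})}$ together with continuity of $t^{*}$, and that relation is compatible with a moving event time. In $\mathbb{R}^{2}$ take $H=\{x_{2}=0\}$ and $H_{t}=\{x_{2}=x_{1}-t\}$. These lines are pairwise distinct, every point $(s,0)$ of $H$ lies on $H_{s}$, and each level set $\{t^{*}=s\}$ is a single point. So the dichotomy you want (``some level set $\mathcal{F}_{t}$ has nonempty relative interior, or $H=H_{t}$ for many $t$'') does not follow from Sard: at a regular point of a nonconstant smooth $t^{*}$ the level sets are $(n-2)$-dimensional, never relatively open. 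Baire does not rescue it either, because $\bigcup_{t\in D}(\mathcal{F}\cap H_{t})$ over a countable set $D$ of times need not cover $\mathcal{F}$.

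What is missing is the one-sided nature of the event. With $h$ fixed, $\psi(t,x_{0})=c_{\mathrm{ev}}^{\top}z(t;x_{0})+c_{0}$ is affine in $x_{0}$. The region is $C=\{x_{0}:\psi(t,x_{0})\le0\ \text{for all } t\}$ ($\ge0$ for a multiplier), and $t^{*}(x_{0})$ is the time at which $\max_{t}\psi(t,x_{0})=0$ is attained.

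Hence, for $\bar x_{0}\in\mathcal{F}$ and $\tau=t^{*}(\bar x_{0})$, the affine function $\psi(\tau,\cdot)$ is $\le0$ on $\mathcal{F}$ and vanishes at $\bar x_{0}$. In other words, $H_{\tau}$ supports the convex set $C$ at $\bar x_{0}$. Since $\mathcal{F}$ is $(n-1)$-dimensional and lies in $H$, it is relatively open in $H$, and an affine function with an interior maximum there is constant. So $\psi(\tau,\cdot)\equiv0$ on $\mathcal{F}$ and $H_{\tau}=H$. This holds at every point of $\mathcal{F}$, and injectivity of $t\mapsto H_{t}$ then gives (i) directly, with no implicit function theorem, Sard or Baire.

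In the example above it is exactly this maximality that fails: $\psi(t,(s,0))=t-s$ is increasing in $t$, so $t=s$ is not where it peaks.
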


\begin{proof}
(a) Take any point $x_{0}$ of $\mathcal{F}$. By \eqref{eq:eps} it satisfies
$a(x_{0})^{\!\top}x_{0}+a_{0}(x_{0})=0$, and by \eqref{eq:ab} the two
coefficients depend on $x_{0}$ only through $t^{*}$ and $h$. Under (i) and
(ii), $t^{*}$ and $h$ are constant along $\mathcal{F}$, so $a$ and $a_{0}$ are
constant along $\mathcal{F}$, and every $x_{0}\in\mathcal{F}$ satisfies one
affine equation $a^{\!\top}x_{0}+a_{0}=0$, a hyperplane. For the special case, $t^{*}=0$ gives $\Phi=I$ and $\varphi=0$ in \eqref{eq:ab}, so $a^{\!\top}=c_{\mathrm{ev}}^{\!\top}W$ and $a_{0}=c_{\mathrm{ev}}^{\!\top}w+c_{0}$; an event that involves the state only has $c_{\mathrm{ev}}=(c_{x},0)$, so only the state rows of $W$ and $w$ enter, which are $I$ and $0$ whatever $h$ is, and $a=c_{x}$, $a_{0}=c_{0}$ contain no $h$.

(b) With no switching point, $h=t_{f}$ cannot move, so (ii) holds and only (i) remains. With $h$ fixed, \eqref{eq:affine} makes the event function affine in $x_{0}$ with coefficients that depend on $t$ only, $\psi(t,x_{0})=c_{\mathrm{ev}}^{\!\top}z(t;x_{0})+c_{0}=\alpha(t)^{\!\top}x_{0}+\beta(t)$. The region is $C=\{x_{0}:\psi(t,x_{0})\le0 \text{ for all } t\}$ ($\ge0$ for a multiplier). For each fixed $t$, $\psi(t,x_{0})\le0$ is one linear inequality in $x_{0}$, that is, one side of a hyperplane, and $C$ is the set of $x_{0}$ that satisfy all of them at once, which is convex. Its boundary $\mathcal{F}$ is where $\max_{t}\psi(t,x_{0})=0$, reached at $t^{*}(x_{0})$. If (i) holds, $t^{*}=\tau$ on $\mathcal{F}$, so $\mathcal{F}$ lies in the hyperplane $H(\tau)=\{x_{0}:\psi(\tau,x_{0})=0\}$. Conversely, suppose $\mathcal{F}$ lies in one hyperplane $H$. Take $\bar{x}_{0}\in\mathcal{F}$ and $\tau=t^{*}(\bar{x}_{0})$. Then $\psi(\tau,\cdot)\le0$ on $C$, hence on $\mathcal{F}$, and $\psi(\tau,\bar{x}_{0})=0$, so the linear function $\psi(\tau,\cdot)$ reaches its largest value on the flat set $\mathcal{F}$ at an interior point; a linear function can do so only if it is constant, so $\psi(\tau,\cdot)=0$ on $\mathcal{F}$ and $\mathcal{F}\subset H(\tau)=H$. This holds at every point of $\mathcal{F}$, and distinct times give distinct hyperplanes, so $t^{*}$ is the same at every point, which is (i).
\end{proof}

In general, condition (i) alone is not sufficient; Section~\ref{sec:curved} shows a boundary whose event is fixed at $t=0$ and which is curved.

\begin{figure}[!t]
\centering
\includegraphics[width=\columnwidth]{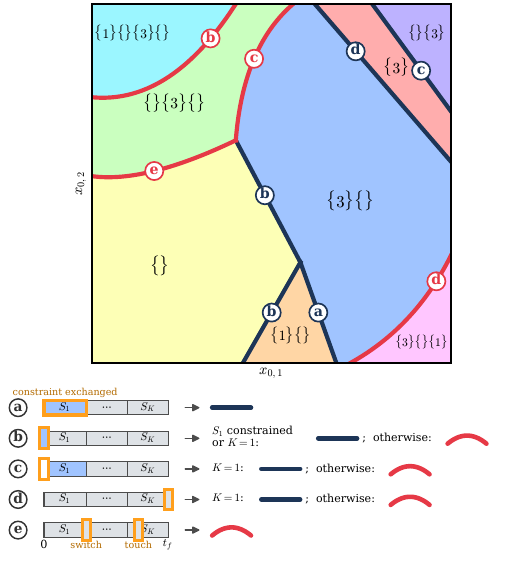}
\caption{The five cases of Proposition~\ref{prop:test} on a schematic partition of the initial-state space (illustrative, not computed). Each boundary carries its case letter; hyperplanes in navy, curved boundaries in red. Cases (b), (c) and (d) appear once straight and once curved, depending on the structure $S$ on the boundary, the label with fewer arcs. Bottom, for each case: the bar is the horizon of $S$ with its arcs $S_{1},\dots,S_{K}$ (blue, a constrained arc; white, a free arc; grey, an arc of either kind); the orange outline is the new arc of the neighbouring region, filled by its kind; the stroke that follows is the resulting shape of the boundary.}
\label{fig:cases}
\end{figure}

\section{The Two Conditions from the Labels}\label{sec:step3}

Theorem~\ref{thm:main} needs $t^{*}$ and $h$, which are known only after a trajectory has been solved. The labels provide both without any computation; the labels of two neighbouring regions differ by the new arc. Removing it yields the structure $S$ on the boundary, and its position gives the event time; a new arc placed first corresponds to $t^{*}=0$, placed last to $t^{*}=t_{f}$, and placed between two arcs to an interior instant. The list of cases is complete because the new arc can appear only in the four ways of Section~\ref{sec:problem}-C and is either constrained or free; the exchange of constraints at $t=0$, case (a), adds no arc. Fig.~\ref{fig:cases} shows the five cases on a schematic partition.

\begin{proposition}\label{prop:test}
Let two neighbouring critical regions have arc sequences that differ by one
event, and let $S$ be the one with fewer arcs, the structure of the solution on the
boundary, with $K$ arcs and first arc $S_{1}$. In case (a) both sequences have
$K$ arcs and either can be taken as $S$. Under Assumption~\ref{as:mixed} the
boundary between the two regions is classified by the position of the new arc.
\begin{itemize}
\item[(a)] No new arc; only the constraint active on the first arc differs. \textbf{Hyperplane}.
\item[(b)] A constrained arc is added before the first arc of $S$. \textbf{Hyperplane} if $S_{1}$ is constrained or $K=1$; otherwise \textbf{curved}.
\item[(c)] A free arc is added before the first arc of $S$. \textbf{Hyperplane} if $K=1$; otherwise \textbf{curved}.
\item[(d)] An arc is added after the last arc of $S$. \textbf{Hyperplane} if $K=1$; otherwise \textbf{curved}.
\item[(e)] An arc is added at an interior switching time of $S$, or an arc of $S$ is split by the new arc. \textbf{Curved}.
\end{itemize}
The hyperplane cases are proved by Theorem~\ref{thm:main}(a); the curved cases assume that the moving coefficients $a$, $a_{0}$ in \eqref{eq:ab} do not cancel, except case (e) with $K=1$, which is exact by Theorem~\ref{thm:main}(b).
\end{proposition}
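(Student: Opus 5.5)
The plan is to go case by case and, for each boundary, read off two things from the labels: the event time $t^{*}$ (from where the new arc sits relative to $S$) and whether the durations $h$ of $S$ can move along $\mathcal{F}$. Then Theorem~\ref{thm:main} decides the hyperplane cases. For the curved cases I would argue that $t^{*}$ or $h$ genuinely varies along $\mathcal{F}$, and invoke the non-cancellation hypothesis on $a$, $a_{0}$ in \eqref{eq:ab}.

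First, the hyperplane cases, in this order.

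In case (a) the event is at $t^{*}=0$. The exchange of active constraints at $t=0$ means one constraint function $g_{j}$ reaches zero where $g_{i}=0$ holds. With the input eliminated through $g_{i}=0$, the condition $g_{j}=0$ at $t=0$ involves only $x_{0}$, so the event condition is state-only. The special clause of Theorem~\ref{thm:main}(a) then gives a hyperplane without needing (ii).

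In case (b) with $S_{1}$ constrained (say constraint $i$), the new constrained arc $j$ appears at $t^{*}=0$ while $g_{i}=0$. The event is again $g_{j}=0$ at $t=0$ with $u$ fixed by $g_{i}$, hence state-only, and the same clause applies.

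In case (b) with $K=1$, and in cases (c) and (d) with $K=1$, $S$ has no switching point, so $h=t_{f}$ is fixed and (ii) holds. The event time is pinned at $0$ or $t_{f}$, so (i) holds, and Theorem~\ref{thm:main}(a) applies.

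Case (e) with $K=1$ is a single arc split at an interior touching point. There Theorem~\ref{thm:main}(b) applies directly. The event time is interior and is the maximiser of $\psi(t,x_{0})$; it moves with $x_{0}$ unless the hyperplanes $H(t)$ coincide, which the distinctness hypothesis excludes. So (i) fails and the boundary is curved.

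Next, the curved cases with $K\ge2$. In cases (c), (d), and (b) with $S_{1}$ free, $t^{*}$ is fixed at $0$ or $t_{f}$, but the event condition involves the costate:
\begin{itemize}
\item in (c), the new free arc appears when the multiplier $\mu_{i}$ of $S_{1}$ reaches zero;
\item in (b) with $S_{1}$ free, $g_{j}$ evaluated with $u=-B^{\top}\lambda/R$ reaches zero;
\item in (d), either $\lambda(t_{f})=Px(t_{f})$ enters, or the state at $t_{f}$ is reached through a flow that depends on $h$.
\end{itemize}
By Lemma~\ref{lem:affine} and the remark after it, $\lambda(0)=G(h)x_{0}+\eta(h)$ depends on every switching time, and in (d) $\Phi(h,t_{f})$ does too. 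On an $(n-1)$-dimensional boundary the $K-1$ free switching times are determined by $x_{0}$ through the switching conditions and are not constant in general. Hence $a(h)$ and $a_{0}(h)$ vary along $\mathcal{F}$, and under the non-cancellation hypothesis $\mathcal{F}$ is not contained in a hyperplane. In case (e) with $K\ge2$, the interior event time is either a switching time $t_{k}(x_{0})$ or a touching time inside an arc; both move with $x_{0}$, so even (i) fails.

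The main obstacle is making ``not contained in a hyperplane'' rigorous in the curved cases. Showing that $h$ or $t^{*}$ varies does not by itself prove curvature, which is why the proposition assumes non-cancellation. My first attempt would be to parametrise $\mathcal{F}$ locally by $x_{0}$ and differentiate the boundary normal $a(x_{0})$. I would then show that its derivative along $\mathcal{F}$ is nonzero whenever $\partial h/\partial x_{0}\neq0$, since $\partial a/\partial h$ is nonzero generically. Finally I would accept this genericity as the stated assumption rather than prove it.
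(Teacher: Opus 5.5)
\textbf{Overall.} Your case analysis is the paper's. You read $t^{*}$ and whether $h$ can move from the labels, and you use Theorem~\ref{thm:main}(a) for the hyperplane cases. For case (a) and for case (b) with $S_{1}$ constrained you use its state-only clause at $t^{*}=0$; the paper eliminates $u(0)$ directly, which amounts to the same thing. For the curved cases with $K\ge2$ you invoke the non-cancellation hypothesis, as the paper does.

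\textbf{The gap: case (e) with $K=1$.} You assert that the touching time moves with $x_{0}$ ``unless the hyperplanes $H(t)$ coincide, which the distinctness hypothesis excludes.'' That inference does not hold.
\begin{itemize}
\item In Theorem~\ref{thm:main}(b), distinctness is used only for the converse direction: a flat $\mathcal{F}$ forces $t^{*}$ to be constant. It says nothing about whether $t^{*}$ actually varies.
\item A constant interior touch time does not require $H(t)=H(t')$ for some $t\neq t'$. It requires something local. If $t^{*}\equiv\tau\in(0,t_{f})$ on $\mathcal{F}$, then $\psi(\tau,\cdot)=0$ and $\partial_{t}\psi(\tau,\cdot)=0$ both hold on the $(n-1)$-dimensional set $\mathcal{F}$. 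Hence $(\dot\alpha(\tau),\dot\beta(\tau))=\kappa\,(\alpha(\tau),\beta(\tau))$ for some scalar $\kappa$, i.e.\ the family $t\mapsto H(t)$ is stationary to first order at $\tau$.
\item Injectivity of $t\mapsto H(t)$ does not rule this out.
\end{itemize}

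\textbf{What would close it.} You need a transversality condition rather than distinctness. At a nondegenerate touch, $\partial_{t}^{2}\psi(t^{*},x_{0})<0$, the implicit function theorem applied to $\partial_{t}\psi(t,x_{0})=0$ gives $\nabla t^{*}=-\dot\alpha(t^{*})/\partial_{t}^{2}\psi(t^{*},x_{0})$. The normal to $\mathcal{F}$ is $\alpha(t^{*})$, by the envelope argument. So $t^{*}$ is locally constant on $\mathcal{F}$ only where $\dot\alpha(t^{*})$ is parallel to $\alpha(t^{*})$.

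The paper does not prove that $t^{*}$ moves either. Its proof says the touch instant is ``in general moving'' and the boundary is curved ``except when that instant stays constant along the boundary.'' So ``exact by Theorem~\ref{thm:main}(b)'' means that for $K=1$ no non-cancellation assumption on $a,a_{0}$ is needed, because flatness is equivalent to constant $t^{*}$. It does not mean curvature is unconditional. Either state that hedge, or add the non-parallel condition; do not derive the motion of $t^{*}$ from distinctness.
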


\begin{proof}
The new arc occupies the single instant $t^{*}$, and the event condition is \eqref{eq:eps} evaluated there. Case (a): both constraints hold at $t=0$; eliminating $u(0)$ between $g_{i}=0$ and $g_{j}=0$ gives $(C_{x,j}-C_{u,j}C_{x,i}/C_{u,i})\,x_{0}=d_{j}-C_{u,j}d_{i}/C_{u,i}$, whose coefficients are numbers of the problem, the same at every point, a hyperplane for any $h$. Cases (b) and (c): $t^{*}=0$ and the event is $g_{j}(x_{0},u(0))=0$, respectively $\mu_{i}(0)=0$. If $S_{1}$ is constrained by $i$, $u(0)=(d_{i}-C_{x,i}x_{0})/C_{u,i}$ contains the state only and Theorem~\ref{thm:main}(a) gives a hyperplane for any $K$; otherwise the event contains $\lambda(0)=G(h)x_{0}+\eta(h)$, which is fixed for $K=1$, a hyperplane, and moves with $h$ for $K\ge2$, curved. Case (d): $t^{*}=t_{f}$ and the event contains $\Phi(h,t_{f})$, fixed for $K=1$, a hyperplane, and moving for $K\ge2$, curved. Case (e): $t^{*}$ is a switching time of $S$ or the instant of a touch, both determined by the trajectory from $x_{0}$ and in general moving with it, so (i) fails; curved, except when that instant stays constant along the boundary.
\end{proof}

In summary, the boundary is curved whenever a time can move along it and is a hyperplane when no quantity entering the event condition can move.

\begin{figure}[t]
\centering
\includegraphics[width=0.66\columnwidth]{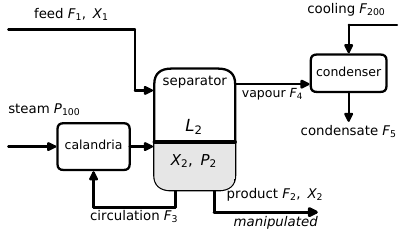}
\caption{The forced-circulation evaporator, states $L_{2}$ (level), $X_{2}$ (composition) and input $F_{2}$ (product flow).}
\label{fig:schem}
\end{figure}

\section{Forced-Circulation Evaporator Example}\label{sec:evap}

\subsection{Model}

We use the Newell--Lee forced-circulation evaporator \cite{newell1989,sakizlis2003}, a standard benchmark that concentrates a dilute feed by boiling off solvent (Fig.~\ref{fig:schem}). With the other inputs at their nominal values and the pressure regulated, the model reduces to the separator level $L_{2}$ and the product composition $X_{2}$, with the product flow $F_{2}$ as the single input. The mixed path constraint is the solute draw rate $F_{2}X_{2}$, in which input and state both appear (Assumption~\ref{as:mixed}).

We work in scaled deviations, $x_{1}=(L_{2}-1.0)/0.1$ ($L_{2}$ in m), $x_{2}=(X_{2}-25)/2.5$ ($X_{2}$ in \%) and $u=(F_{2}-2.0)/0.5$ ($F_{2}$ in kg/min), so the box $\Theta=[-2,2]^{2}$ is $L_{2}\in[0.8,1.2]$~m and $X_{2}\in[20,30]\%$. Linearising at the nominal point gives
\begin{equation}
A=\begin{bmatrix}0 & 0.104538\\ 0 & -0.1\end{bmatrix},\quad
B=\begin{bmatrix}-0.25\\ -0.25\end{bmatrix},
\label{eq:AB}
\end{equation}
in units of $\mathrm{min}^{-1}$. We take $t_{f}=10$~min, $Q=I$, $R=1$, $P=I$.

The four path constraints are the flow limits $F_{2}\le3.0$ and $F_{2}\ge1.0$~kg/min (constraints 1, 2) and the draw-rate limits $F_{2}X_{2}\le57$ and $F_{2}X_{2}\ge43$ (constraints 3, 4), linearised at the nominal point; in scaled variables,
\begin{equation}
C_{x}\!=\!\begin{bmatrix}0&0\\0&0\\0&5\\0&-5\end{bmatrix}\!,\;\;
C_{u}\!=\!\begin{bmatrix}1\\-1\\12.5\\-12.5\end{bmatrix}\!,\;\;
d\!=\!\begin{bmatrix}2\\2\\7\\7\end{bmatrix}\!.
\end{equation}
The flow limits are never active in the box.

\subsection{Partition}

The partition of the box has seven critical regions and eight boundaries,
Fig.~\ref{fig:map}(a). It was computed by the procedure of Section~\ref{sec:framework}, the four hyperplanes by one linear solve each and the four curved boundaries by sweeping the moving quantity of each; no optimality system was solved.
Table~\ref{tab:faces} lists the four boundaries of constraint~3 with their case, chord (distance between the endpoints, scaled units) and sagitta (largest distance from the boundary to the chord, zero for a hyperplane). Constraint~4 mirrors constraint~3 (opposite signs in $C_{x}$ and $C_{u}$, same $d$), so its four boundaries are their mirror images and are not listed.

\begin{table}[t]
\caption{Boundaries of constraint~3 in the evaporator.}
\label{tab:faces}
\centering
\footnotesize
\setlength{\tabcolsep}{1.4pt}
\begin{tabular}{@{}lllllc@{}}
\toprule
Case & Boundary & Type & $t^{*}$ & Moves & Sagitta, chord \\
\midrule
(b) & $\{\}|\{3\}\{\}$ & hyperplane & $0$ & none & $0$, line \eqref{eq:cf1} \\
(d) & $\{3\}|\{3\}\{\}$ & hyperplane & $t_{f}$ & none & $0$, line \eqref{eq:cf2} \\
(c) & $\{3\}\{\}|\{\}\{3\}\{\}$ & curved & $0$ & $h_{1}$, 0--7.7 min & $0.042$, $1.16$ \\
(e) & $\{\}|\{\}\{3\}\{\}$ & curved & 0--5.0 min & $t^{*}$ & $0.177$, $1.33$ \\
\bottomrule
\end{tabular}
\end{table}

\subsection{The hyperplanes}
\label{sec:lines}

Here cases (b) and (d) have $K=1$, so \eqref{eq:ab} gives the boundary in
closed form. For the boundary between $\{\}$ and $\{3\}\{\}$, where constraint~3 becomes active at the start, the result is, with $L_{2}$ in m and $X_{2}$ in \%,
\begin{equation}
25.15\,L_{2} + X_{2} = 51.61 ,
\label{eq:cf1}
\end{equation}
and for the boundary between $\{3\}$ and $\{3\}\{\}$, where constraint~3 becomes inactive at the end,
\begin{equation}
5.38\,L_{2} + X_{2} = 35.07 .
\label{eq:cf2}
\end{equation}
\subsection{The curved boundaries, computed exactly}
\label{sec:curved}

On a boundary of case (c) or (e) exactly one quantity moves, and the boundary is computed from the structure with fewer arcs. In case (c), between $\{3\}\{\}$ and $\{\}\{3\}\{\}$, take the solution with structure $\{3\}\{\}$, constraint~3 active for a duration $h_{1}$ and then free, and fix $h_{1}$. By Lemma~\ref{lem:affine} the multiplier $\mu_{3}$ at any instant is then affine in $x_{0}$, and two conditions hold on the boundary, the switching condition $\mu_{3}(h_{1})=0$ of $\{3\}\{\}$ and the event $\mu_{3}(0)=0$, which lets the free arc of $\{\}\{3\}\{\}$ begin at $t=0$. Solving these two affine equations in the two unknowns $x_{0}$ gives one boundary point for this $h_{1}$. In case (e), between $\{\}$ and $\{\}\{3\}\{\}$, take the free solution, structure $\{\}$, and fix the instant $t^{*}$ at which constraint~3 touches zero. On the boundary $g_{3}(t^{*})=0$ and $\partial g_{3}/\partial t\,(t^{*})=0$, since a touch is a maximum of $g_{3}$; by Lemma~\ref{lem:affine} these are two affine equations in $x_{0}$, and solving them gives one boundary point for this $t^{*}$. Stepping $h_{1}$, respectively $t^{*}$, over its range sweeps the boundary; every point is checked against all constraints and multipliers over the horizon, and the boundary ends where a second one reaches zero.

The case (e) boundary is visibly curved (Table~\ref{tab:faces}); since its structure $\{\}$ has no switching point, its classification is exact by Theorem~\ref{thm:main}(b). The case (c) boundary is the one that needs condition (ii); its event $\mu_{3}(0)=0$ is fixed at $t=0$, and it is curved only because $h_{1}$ moves and enters the costate at $t=0$ through the terminal condition, the situation of Fig.~\ref{fig:schematic}(b). Its sagitta is not visible in Fig.~\ref{fig:map}(a) but is nonzero, every point being obtained from two linear equations, and Proposition~\ref{prop:test} classifies it from the labels alone, without any threshold.

\subsection{What a time grid does to a curved boundary}

With zero-order hold over $N$ intervals and the constraints enforced at the nodes, the same problem is a multiparametric quadratic program, solved here with PPOPT \cite{kenefake2022}, $43$ polytopic regions at $N=10$ against seven in continuous time, Fig.~\ref{fig:map}(b). A polytope has no curved face, so discrete time replaces the curve by a sequence of narrow polytopes, and a finer grid only increases their number.

\section{Conclusion}

A critical-region boundary is one linear equation in the initial state once the arc sequence and the durations are fixed. It is a hyperplane when the coefficients of that equation are the same along it, that is, when the event occurs at a fixed time and the durations do not move; the second condition is the reason an event fixed at an end of the horizon is not sufficient by itself. Both conditions can be read from the arc sequences of the two neighbouring regions, so in the procedure of Section~\ref{sec:framework} a boundary is classified before it is computed, and its cost, one linear solve or a sweep, is known in advance. Two questions remain open. For several arcs the curved classification assumes that the moving coefficients do not cancel, an assumption Theorem~\ref{thm:main}(b) removes only when the boundary has no switching point; and pure state constraints, where the costate can jump, are outside the present class.

\end{document}